\newtheorem{theorem}{Theorem}[section]
\newtheorem{lemma}{Lemma}[section]
\newtheorem{example}{Example}[section]
\newcommand{\M}{\mathbb{M}_{n}}
\begin{document}

\title[Singular value inequalities related to Positive Partial Transpose Blocks]{Singular value inequalities related to Positive Partial Transpose Blocks}

\author[M. Alakhrass]{Mohammad Alakhrass}

\address{Department of Mathematics, University of Sharjah , Sharjah 27272, UAE.}
\email{\textcolor[rgb]{0.00,0.00,0.84}{malakhrass@sharjah.ac.ae}}

\subjclass[2010]{15A18, 15A42, 15A45, 15A60.}

\maketitle

\begin{abstract}
In this article, we introduce several singular value and norm inequalities comparing the main diagonal and the
off-diagonal components of a $2 \times 2$ PPT block. Some applications are given to obtain a new set of inequalities, some of which generalize and improve many well known singular value and norm inequalities in the literature. 

\end{abstract}

\textbf{keywords:} Block matrices; positive partial transpose matrices; singular value inequalities, norm inequalities.

\section{Introduction}
Let $\M$ be the algebra of all $n \times n$ complex matrices. For $X \in \mathbb{M}_n$, the singular values of $X$ are the eigenvalues of the positive semidefinite matrix $|X|=(X^*X)^{1/2}$. They are denoted by $s_j(X), j=1,2,...,n$ and are arranged so that $s_1(X) \geq s_2(X)\geq ... \geq s_n(X)$.

For $A, B, X \in \M$, let $H$ be the $2 \times 2$ block matrix defined as follows
$$
H=
\left(
\begin{array}{cc}
A & X \\
X^* & B \\
\end{array}
\right)
.$$
It is well known that $H \geq 0$ if and only if
\begin{equation}\label{Schure criterion}
B- X^* A^{-1} X \geq 0,
\end{equation}
provided that $A$ is strictly positive.

Notice that  the notation $X\geq 0$ (resp. $X>0$) means that $X$ is positive semidefinite (resp. positive definite). For two Hermitian
$X, Y \in \mathbb{M}_n$, $X \leq Y$  means $Y-X \geq 0$.

We remark that the $2\times 2$ blocks are very important in studying matrices
in general and they also play an important roll in studying sectorial matrices, see for example \cite{Alakhrass-2021}, \cite{Alakhrass-2020} and \cite{Alakhrass-2019}.

The partial transpose of the block $H$ is defined by
$$
H^{\tau}=
\left(
\begin{array}{cc}
A & X^* \\
X & B \\
\end{array}
\right).
$$
The block $H$ is said to be positive partial transpose, or PPT for short, if both $H$ and $H^{\tau}$ are positive semidefinite. Therefore,
by \eqref{Schure criterion}, $H$ is PPT if and only if
$$
B- X^* A^{-1} X \geq 0 \quad \text{and} \quad B- X A^{-1} X^* \geq 0,
$$
provided that $A$ is strictly positive.

The class of PPT matrices have been thoroughly studied in literature, for example see
\cite{Choi-2018, Choi-2017, Hiroshima-2003, Lee-2015,Migh-2015,  Pan-Fu-2021} and the references therein.

In this article, we focus on singular values inequalities and unitarily invariant norm inequalities
connecting the  main and the off-diagonal of the PPT Block. Namely, we show that:
If
$
\left(
\begin{array}{cc}
A & X \\
X^* & B \\
\end{array}
\right)
$
is PPT, then for $t \in [0,1]$

$$
\prod_{j=1}^{k} s_j^2(X) \leq  \prod_{j=1}^{k}  s_j(A \#_t B)s_j(A \#_{1-t} B), \quad k=1,2,...,n
$$

As a result of the this log majorization inequalities we establish the following norm inequality for any unitarily invariant norm.
$$
\| |X|^{r} \|^2 \leq \| (A \#_t B)^{r/2} \|  \|(A \#_{1-t} B)^{r/2} \|,
$$
where  $r>0$ and $t \in [0,1]$.
We also show that
$$
s_j(X)  \leq  s_{[\frac{j+1}{2}]}  \left(  \frac{ \left( A \#_t B \right) +  \left( A \#_{1-t} B\right)}{2} \right),  \quad j=1,2,...,n,
$$
where $[x]$ is the greatest integer $\leq x$.
Several applications are given to obtain a new sets of singular value and unitarily invariant norm inequalities in which we obtain result that generalize and improve many well known results in the literature.

\section{Proofs of main results}

Before stating the main results, let us recall  some important facts about geometric and weighted-geometric mean of two positive matrices.
For positive definite $X, Y \in \M$ and $t \in [0,1]$, the weighted geometric mean of $X$ and $Y$ is defined as follows
$$
X \#_{t} Y= X^{1/2}(X^{-1/2}Y X^{-1/2} )^{t}X^{1/2}.
$$
When $t=\frac{1}{2}$, we drop $t$ from the above definition, and we simply write $X \#Y$ and call it the geometric mean of $X$ and $Y$.
It is well-known that

\begin{equation}\label{A-G-mean inequality}
X \#_t Y \leq (1-t) X + t Y.
\end{equation}
See \cite[Chapter 4]{B-Book 2-2007}.

When $t=\frac{1}{2}$, we have the following characterization of the geometric mean.
\begin{equation}\label{Geometric mean characterization}
 X \# Y = \max \left\{ Z : Z=Z^*,
\left(
  \begin{array}{cc}
    X & Z \\
    Z & Y \\
  \end{array}
\right)\geq 0
\right\}.
\end{equation}
See \cite[Theorem 4.1.3]{B-Book 2-2007}.
We have for $k=1,2,..., n$ and $r>0$

\begin{align}\label{singular-Inq. for Geometric mean}
\prod_{j=1}^{k} s_{j}^r(X \#_t Y) &\leq \prod_{j=1}^{k} s_{j}^r( e^{ (1-t) \log X + t \log Y}) \notag  \\
&\leq \prod_{j=1}^{k} s_{j}(Y^{rt/2}X^{(1-t)r}Y^{rt/2}) \notag  \\
&\leq \prod_{j=1}^{k} s_{j}(X^{(1-t)r}Y^{tr}).
\end{align}
See \cite{Bhatia-Grover-2012}.

The following lemma is important in our proofs.

\begin{lemma}\label{Geometric means-PPT}
If
$
\left(
\begin{array}{cc}
A & X \\
X^* & B \\
\end{array}
\right)
$
is PPT, then for every $t \in [0,1]$ the block
$
\left(
\begin{array}{cc}
A\#_t B & X \\
X^* & A \#_{1-t} B \\
\end{array}
\right)
$
is PPT.
\end{lemma}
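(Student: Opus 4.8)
The plan is to reduce, via one congruence, the PPT property of the enlarged block to a single Heinz-type norm inequality, which is then handled by complex interpolation.

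First I would dispose of degeneracies: since $A\#_tB$ and $A\#_{1-t}B$ are cleanest for positive definite data, assume $A,B>0$; the general case follows by replacing $A,B$ with $A+\varepsilon I,\,B+\varepsilon I$ — this only adds positive matrices to the diagonal corners, so it keeps both $H$ and $H^{\tau}$ positive, hence keeps the block PPT — and then letting $\varepsilon\downarrow0$, using that $A\#_sB$ is continuous in $(A,B)$, that $X$ is untouched, and that a limit of positive semidefinite matrices is positive semidefinite. Recall that the hypothesis says precisely $\left(\begin{smallmatrix}A&X\\X^{*}&B\end{smallmatrix}\right)\ge0$ and $\left(\begin{smallmatrix}A&X^{*}\\X&B\end{smallmatrix}\right)\ge0$, and that the goal is to show that $R:=\left(\begin{smallmatrix}A\#_tB&X\\X^{*}&A\#_{1-t}B\end{smallmatrix}\right)$ and its partial transpose $R^{\tau}$ are positive semidefinite.

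Next, conjugate every $2\times2$ block above by the invertible matrix $\left(\begin{smallmatrix}A^{-1/2}&0\\0&A^{-1/2}\end{smallmatrix}\right)$; this preserves positive semidefiniteness, and since $A^{-1/2}(A\#_sB)A^{-1/2}=(A^{-1/2}BA^{-1/2})^{s}$, on putting $C:=A^{-1/2}BA^{-1/2}>0$ and $Y:=A^{-1/2}XA^{-1/2}$ the two hypothesis blocks become $\left(\begin{smallmatrix}I&Y\\Y^{*}&C\end{smallmatrix}\right)\ge0$ and $\left(\begin{smallmatrix}I&Y^{*}\\Y&C\end{smallmatrix}\right)\ge0$, which by the Schur complement (contraction) criterion are just $\|YC^{-1/2}\|\le1$ and $\|C^{-1/2}Y\|\le1$. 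Similarly $R\ge0$ turns into $\left(\begin{smallmatrix}C^{t}&Y\\Y^{*}&C^{1-t}\end{smallmatrix}\right)\ge0$, i.e. $\|C^{-t/2}YC^{-(1-t)/2}\|\le1$, and $R^{\tau}\ge0$ into $\|C^{-t/2}Y^{*}C^{-(1-t)/2}\|\le1$. So the lemma is reduced to the implication: if $\|YC^{-1/2}\|\le1$ and $\|C^{-1/2}Y\|\le1$, then $\|C^{-t/2}YC^{-(1-t)/2}\|\le1$ for all $t\in[0,1]$, together with the same statement for $Y^{*}$, which is legitimate because the two hypotheses are invariant under $Y\mapsto Y^{*}$.

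This last estimate is the heart of the argument, and I expect it to be the main point. I would prove it by Hadamard's three-lines theorem applied to $g(z)=C^{-z/2}YC^{-(1-z)/2}$, which is entire and bounded on the strip $0\le\operatorname{Re}z\le1$ (fixed finite dimension). On $\operatorname{Re}z=0$ one has $g(iy)=C^{-iy/2}\bigl(YC^{-1/2}\bigr)C^{iy/2}$ with $C^{\pm iy/2}$ unitary, hence $\|g(iy)\|=\|YC^{-1/2}\|\le1$; on $\operatorname{Re}z=1$ one has $g(1+iy)=\bigl(C^{-1/2}C^{-iy/2}\bigr)Y\,C^{iy/2}$, hence $\|g(1+iy)\|=\|C^{-1/2}Y\|\le1$. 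Three lines then gives $\|g(t)\|\le\|YC^{-1/2}\|^{1-t}\|C^{-1/2}Y\|^{t}\le1$, which is exactly what is needed; applying the same to $Y^{*}$ finishes both $R\ge0$ and $R^{\tau}\ge0$, so the new block is PPT. Equivalently, one may simply quote the known inequality $\|P^{a}ZP^{b}\|\le\|PZ\|^{a}\|ZP\|^{b}$ for $P>0$, $a,b\ge0$, $a+b=1$, with $P=C^{-1/2}$. The only places requiring care are the congruence bookkeeping and the two boundary-line norm computations (where one uses the unitarity of $C^{\pm iy/2}$); once the problem is cast as $\|C^{-t/2}YC^{-(1-t)/2}\|\le1$ there is no further obstacle.
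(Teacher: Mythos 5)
Your proof is correct, and it takes a genuinely different route from the paper. The paper stays entirely inside the calculus of operator means: from the Schur complement conditions $X^*A^{-1}X\le B$ and $X^*B^{-1}X\le A$ it computes
$X^*(A\#_tB)^{-1}X=X^*(A^{-1}\#_tB^{-1})X=(X^*A^{-1}X)\#_t(X^*B^{-1}X)\le B\#_tA=A\#_{1-t}B$,
using the inversion formula for $\#_t$, congruence invariance, monotonicity of the mean in both arguments, and the symmetry $B\#_tA=A\#_{1-t}B$; positivity of the partial transpose is obtained by the same computation with $X$ replaced by $X^*$. You instead normalize $A=I$ by a single congruence (legitimate after your $\varepsilon$-perturbation), which converts all four block-positivity statements into operator-norm bounds on $C=A^{-1/2}BA^{-1/2}$ and $Y=A^{-1/2}XA^{-1/2}$, and then the whole lemma collapses to the interpolation inequality $\|C^{-t/2}YC^{-(1-t)/2}\|\le\|YC^{-1/2}\|^{1-t}\|C^{-1/2}Y\|^{t}$, proved by three lines. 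All the reductions check out (in particular $A^{-1/2}(A\#_sB)A^{-1/2}=C^{s}$ and the boundary computations using the unitarity of $C^{\pm iy/2}$), and the symmetry of the hypotheses under $Y\mapsto Y^*$ correctly handles the partial transpose. What your route buys: it avoids invoking monotonicity and the inversion formula for weighted means (and incidentally sidesteps a small gap in the paper's use of congruence invariance $X^*(M\#_tN)X=(X^*MX)\#_t(X^*NX)$, which strictly speaking requires $X$ invertible or an extra limiting argument), and it yields the sharper quantitative conclusion that the off-diagonal contraction constant interpolates geometrically between the two hypotheses. What it costs is the appeal to complex interpolation, which is heavier machinery than the two-line mean-theoretic argument the paper uses.
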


\begin{proof}
Since
$
\left(
\begin{array}{cc}
A & X \\
X^* & B \\
\end{array}
\right)
$
is PPT,
it clear that both
$$
\left(
\begin{array}{cc}
A & X \\
X^* & B \\
\end{array}
\right)
\quad
\text{and}
\quad
\left(
\begin{array}{cc}
B & X \\
X^* & A \\
\end{array}
\right)
$$
are positive definite. Without loss of generality we may assume they are positive definite, otherwise we use the well know continuous argument. Therefore,
$$
X^* A^{-1} X  \leq B \quad \text{and} \quad X^* B^{-1} X \leq A.
$$
Observe,

\begin{align}
X^* ( A \#_t B)^{-1}) X & = X^* ( A^{-1} \#_t B^{-1} ) X  \notag \\
& = (  X^*A^{-1} X)  \#_t   (X^*B^{-1} ) X   \notag \\
& \leq B \#_t   A \quad   \text{(by the increasing property of means)}. \notag \\
&= A \#_{1-t} B
\end{align}
And so
$
A \#_{1-t} B  \geq  X^* ( A \#_t B)^{-1}) X .
$
This implies that
$
\left(
\begin{array}{cc}
A\#_t B & X \\
X^* & A \#_{1-t} B \\
\end{array}
\right)
$
is positive definite. Similarly, it can be proved that
$
\left(
\begin{array}{cc}
A\#_t B & X^* \\
X & A \#_{1-t} B \\
\end{array}
\right)
$
is also positive definite. This complete the proof.
\end{proof}

Now, we state the following log majorization inequalities which governs the off diagonal and the main diagonal of a PPT Block.

\begin{theorem}\label{Log measurizations for s}
If
$
\left(
\begin{array}{cc}
A & X \\
X^* & B \\
\end{array}
\right)
$
is PPT, then for $k=1,2,...,n$ and for $t \in [0,1]$.

\begin{align}
\prod_{j=1}^{k} s_j^{2r}(X) & \leq  \prod_{j=1}^{k}  s_j^r(A \#_t B)s_j^r(A \#_{1-t} B)  \notag \\
& \leq \prod_{j=1}^{k} s_j^r \left( e^{(1-t) \log A + t \log B } \right) s_j^r \left( e^{t \log A + (1-t) \log B } \right)   \notag \\
& \leq \prod_{j=1}^{k} s_j \left(B^{tr/2}A^{(1-t)r}B^{tr/2} \right) s_j \left(A^{tr/2}B^{(1-t)r}A^{tr/2} \right)  \notag \\
& \leq \prod_{j=1}^{k} s_j \left(A^{(1-t)r}B^{tr} \right) s_j \left(A^{tr}B^{(1-t)r} \right)  \notag
\end{align}
\end{theorem}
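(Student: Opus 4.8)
The plan is to derive everything from Lemma \ref{Geometric means-PPT} together with the log‑majorization chain \eqref{singular-Inq. for Geometric mean}. The first and decisive step is to extract a usable inequality from the PPT property of the block $\left(\begin{smallmatrix} A\#_t B & X \\ X^* & A\#_{1-t}B\end{smallmatrix}\right)$. Since this block is positive semidefinite, the geometric‑mean characterization \eqref{Geometric mean characterization} — applied after the standard reduction that replaces $X$ by its absolute value via a unitary, or more directly via the fact that for a PSD block $\left(\begin{smallmatrix} P & Y \\ Y^* & Q\end{smallmatrix}\right)$ one has $s_j(Y)\le s_j(P\# Q)$ (equivalently $s_j(Y)^2 \le s_j(P^{1/2}QP^{1/2})$, which follows from $|Y| \le $ something comparable to $P\#Q$ up to unitary congruence) — yields
$$
\prod_{j=1}^k s_j^2(X) \;\le\; \prod_{j=1}^k s_j\bigl((A\#_t B)\,\#\,(A\#_{1-t}B)\bigr)^2 \;\le\; \prod_{j=1}^k s_j(A\#_t B)\,s_j(A\#_{1-t}B),
$$
where the last inequality is the multiplicative/log‑majorization version of $s_j(P\#Q)^2 \le s_j(P)s_j(Q)$ under the product ordering of singular values (Horn's inequality $\prod s_j(ST)\le \prod s_j(S)s_j(T)$ applied to $P^{1/2}$ and $(P^{-1/2}QP^{-1/2})^{1/2}P^{1/2}$, or directly the AM–GM/weak‑majorization estimate for geometric means). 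Raising to the power $r>0$ preserves these product inequalities since $x\mapsto x^r$ is monotone, giving the first line of the theorem.

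For the remaining three lines I would simply invoke \eqref{singular-Inq. for Geometric mean} twice — once with the pair $(A,B)$ and parameter $t$, once with the pair $(A,B)$ and parameter $1-t$ — and multiply the resulting chains term by term. Concretely, \eqref{singular-Inq. for Geometric mean} gives
$$
\prod_{j=1}^k s_j^r(A\#_t B) \le \prod_{j=1}^k s_j^r\bigl(e^{(1-t)\log A + t\log B}\bigr) \le \prod_{j=1}^k s_j\bigl(B^{tr/2}A^{(1-t)r}B^{tr/2}\bigr) \le \prod_{j=1}^k s_j\bigl(A^{(1-t)r}B^{tr}\bigr),
$$
and the same with $t\leftrightarrow 1-t$; taking the product of the two chains (legitimate because each link is a genuine product inequality over $j=1,\dots,k$, and products of such inequalities remain valid) produces lines two through four exactly as stated. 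A small bookkeeping point: the exponent on the Hadamard/Araki–Lieb–Thirring factors must match the statement, i.e.\ $e^{(1-t)\log A + t\log B}$ paired with $e^{t\log A + (1-t)\log B}$, and $B^{tr/2}A^{(1-t)r}B^{tr/2}$ paired with $A^{tr/2}B^{(1-t)r}A^{tr/2}$ — one just has to be careful that applying \eqref{singular-Inq. for Geometric mean} with parameter $1-t$ swaps the roles of the two exponents in the intended way, which it does.

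The main obstacle I anticipate is the very first step: rigorously passing from "$\left(\begin{smallmatrix} A\#_t B & X \\ X^* & A\#_{1-t}B\end{smallmatrix}\right)$ is PPT" to the log‑majorization $\prod_{j=1}^k s_j^2(X) \le \prod_{j=1}^k s_j(A\#_t B)s_j(A\#_{1-t}B)$. The PPT hypothesis (as opposed to mere positivity) is what should let one symmetrize and obtain the clean product bound rather than just the weaker $s_j(X)\le s_j\bigl((A\#_t B)\#(A\#_{1-t}B)\bigr)$; I expect the argument to use both $X^*(A\#_t B)^{-1}X \le A\#_{1-t}B$ and $X(A\#_t B)^{-1}X^* \le A\#_{1-t}B$ from the proof of Lemma \ref{Geometric means-PPT}, combined with a Fischer/Hadamard‑type determinantal inequality on compressions (to get all $k$ at once, not just $k=n$). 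Once that seed inequality is in hand, the rest is monotonicity of $t\mapsto t^r$ and a term‑by‑term multiplication of two instances of \eqref{singular-Inq. for Geometric mean}, which is routine.
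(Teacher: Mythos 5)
The second half of your argument (applying \eqref{singular-Inq. for Geometric mean} once with parameter $t$ and once with $1-t$, then multiplying the two chains link by link) is exactly what the paper does and is fine. The genuine gap is the first inequality, $\prod_{j=1}^{k} s_j^{2}(X) \leq \prod_{j=1}^{k} s_j(A \#_t B)\, s_j(A \#_{1-t} B)$, which you correctly identify as the crux but do not actually establish. The justifications you float are not sound as stated: $s_j(Y) \leq s_j(P \# Q)$ for the off-diagonal block of a merely positive semidefinite $\left(\begin{smallmatrix} P & Y \\ Y^* & Q \end{smallmatrix}\right)$ is not a theorem (the characterization \eqref{Geometric mean characterization} only gives a Loewner bound for Hermitian $Y$, which does not translate into singular value bounds, and indeed the paper's own Theorem 2.3 only achieves the weaker index $[\frac{j+1}{2}]$ even under PPT); the detour through $(A\#_tB)\#(A\#_{1-t}B)$ is an extra unproved link; and the proposed symmetrization via Fischer/Hadamard determinantal inequalities is never carried out. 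You also misdiagnose where PPT enters: it is used only in Lemma \ref{Geometric means-PPT} to produce the block $\left(\begin{smallmatrix} A\#_t B & X \\ X^* & A\#_{1-t}B \end{smallmatrix}\right)$; once that block is known to be positive semidefinite, no further use of the partial transpose is needed for the seed inequality.

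The paper's actual argument is much shorter and you should adopt it: positive semidefiniteness of $\left(\begin{smallmatrix} P & X \\ X^* & Q \end{smallmatrix}\right)$ gives the factorization $X = P^{1/2} K Q^{1/2}$ for some contraction $K$ (Bhatia, \emph{Positive Definite Matrices}, p.~13), whence Horn's inequality $\prod_{j=1}^{k} s_j(STU) \leq \prod_{j=1}^{k} s_j(S)s_j(T)s_j(U)$ together with $s_j(K) \leq 1$ yields
\[
\prod_{j=1}^{k} s_j(X) \;\leq\; \prod_{j=1}^{k} s_j^{1/2}(P)\, s_j^{1/2}(Q)
\]
directly with $P = A\#_t B$, $Q = A\#_{1-t}B$. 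Squaring and raising to the power $r$ (harmless for products of nonnegative terms) gives the first line, and the rest of your write-up then goes through.
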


\begin{proof}
Since
$
\left(
\begin{array}{cc}
A & X \\
X^* & B \\
\end{array}
\right)
$
is PPT, Lemma \ref{Geometric means-PPT} implies that the block
$
\left(
\begin{array}{cc}
A\#_t B & X \\
X^* & A \#_{1-t} B \\
\end{array}
\right)
$
is positive semidefinite. Therefore, by \cite[page 13]{B-Book 2-2007},

$$
X = (A\#_t B)^{1/2} K (A\#_{1-t} B)^{1/2} \quad \text{for some contraction} \quad K.
$$
Then,
\begin{align}
\prod_{j=1}^{k} s_j(X)& = \prod_{j=1}^{k}  s_j \left((A\#_t B)^{1/2} K (A\#_{1-t} B)^{1/2} \right) \notag \\
& \leq \prod_{j=1}^{k}  s_j \left(  (A\#_t B)^{1/2}  \right)     s_j  \left(K \right)      s_j  \left( (A\#_{1-t} B)^{1/2} \right) \notag \\
& \leq \prod_{j=1}^{k} s_j^{1/2} \left(  (A\#_t B) \right) s_j^{1/2}\left( (A\#_{1-t} B)\right) \notag
\end{align}
The other inequalities follow from \eqref{singular-Inq. for Geometric mean}.
\end{proof}

Recall that a norm $\|\cdot\|$ on $\mathbb{M}_n$ is called unitarily invariant norm if $\|UXV\|=\|X \|$ for all $X\in\mathbb{M}_n$ and all unitary elements  $U,V\in\mathbb{M}_n.$

Let $\mathbb{R}^m_{+ \downarrow}$ denotes all the vectors $\gamma=(\gamma_1, \gamma_2, ..., \gamma_n)$ in $\mathbb{R}^n$ with
$\gamma_1 \geq \gamma_2 \geq ... \geq \gamma_n \geq 0$. For each $\gamma \in \mathbb{R}^n_{+ \downarrow}$
let $|| \cdot ||_{\gamma}$ be the norm defined, on $\mathbb{M}_n$, as follows
$$
||X||_{\gamma}= \sum_{j=1}^{n} \gamma_j s_j(X).
$$
Let $\| \cdot \|$ be a unitarily invariant norm on $\mathbb{M}_n$. Then there is a compact set $K_{\| \cdot \|} \subset \mathbb{R}^n_{+ \downarrow}$ such that

\begin{equation}\label{Horncompact set}
\| X \| = \max \{\|X\|_{\gamma}: \gamma \in K_{\| \cdot \|} \} \quad \text{for all} \quad X \in \mathbb{M}_n.
\end{equation}
See \cite{Horn-Mathias-1990}.

\begin{theorem}
If
$
\left(
\begin{array}{cc}
A & X \\
X^* & B \\
\end{array}
\right)
$
is PPT. Let $r>0$ and $t \in [0,1]$, then
$$
\| |X|^{r} \|^2 \leq \| (A \#_t B)^{r} \| \|(A \#_{1-t} B)^{r} \|.
$$
In particular
$$
\|X \| \leq \| A \# B \|.
$$
\end{theorem}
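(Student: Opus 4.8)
The plan is to derive the norm inequality directly from the log-majorization established in Theorem 2.1. Recall that $\prod_{j=1}^{k} s_j^{2r}(X) \leq \prod_{j=1}^{k} s_j^r(A\#_t B)\, s_j^r(A\#_{1-t} B)$ for all $k$, which is precisely the statement that the vector $\bigl(s_j^{2r}(X)\bigr)_j$ is weakly log-majorized by the vector $\bigl(s_j^r(A\#_t B)\, s_j^r(A\#_{1-t} B)\bigr)_j$. Since weak log-majorization implies weak majorization, we get $\sum_{j=1}^{k} s_j^{2r}(X) \leq \sum_{j=1}^{k} s_j^r(A\#_t B)\, s_j^r(A\#_{1-t} B)$ for each $k$. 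Now for any $\gamma \in \mathbb{R}^n_{+\downarrow}$, multiplying by $\gamma_j$ and using Abel summation (which is exactly how one passes from weak majorization to $\|\cdot\|_\gamma$-domination), we obtain $\| |X|^{2r} \|_\gamma \leq \sum_{j=1}^n \gamma_j\, s_j^r(A\#_t B)\, s_j^r(A\#_{1-t} B)$.

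The next step handles the product $s_j^r(A\#_t B)\, s_j^r(A\#_{1-t} B)$ inside the sum. By the Cauchy–Schwarz inequality applied to the sum, $\sum_{j=1}^n \gamma_j\, s_j^r(A\#_t B)\, s_j^r(A\#_{1-t} B) = \sum_{j=1}^n \bigl(\sqrt{\gamma_j}\, s_j^r(A\#_t B)\bigr)\bigl(\sqrt{\gamma_j}\, s_j^r(A\#_{1-t} B)\bigr) \leq \bigl(\sum_{j=1}^n \gamma_j\, s_j^{2r}(A\#_t B)\bigr)^{1/2}\bigl(\sum_{j=1}^n \gamma_j\, s_j^{2r}(A\#_{1-t} B)\bigr)^{1/2}$. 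Since $A\#_t B$ is positive, $s_j^{2r}(A\#_t B) = s_j\bigl((A\#_t B)^{2r}\bigr)$ [interpreting the exponent appropriately; note $s_j(Y^p) = s_j(Y)^p$ for positive $Y$], so each factor equals $\|(A\#_t B)^{2r}\|_\gamma^{1/2}$, respectively $\|(A\#_{1-t} B)^{2r}\|_\gamma^{1/2}$. This gives $\| |X|^{2r}\|_\gamma \leq \|(A\#_t B)^{2r}\|_\gamma^{1/2}\, \|(A\#_{1-t} B)^{2r}\|_\gamma^{1/2}$.

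To finish, I would take the maximum over $\gamma \in K_{\|\cdot\|}$ using the Horn–Mathias representation \eqref{Horncompact set}. On the right-hand side, for each fixed $\gamma$ we bound $\|(A\#_t B)^{2r}\|_\gamma$ and $\|(A\#_{1-t} B)^{2r}\|_\gamma$ by their respective maxima over $K_{\|\cdot\|}$, i.e. by $\|(A\#_t B)^{2r}\|$ and $\|(A\#_{1-t} B)^{2r}\|$; on the left-hand side, taking the max over $\gamma$ yields exactly $\| |X|^{2r}\|$. This produces $\| |X|^{2r}\|^2 \leq \|(A\#_t B)^{2r}\|\, \|(A\#_{1-t} B)^{2r}\|$; replacing $2r$ by $r$ (equivalently running the argument with $r/2$ in place of $r$) gives the stated inequality $\| |X|^r\|^2 \leq \|(A\#_t B)^r\|\, \|(A\#_{1-t} B)^r\|$. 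For the ``in particular'' clause, set $r=1$ and $t=\tfrac12$: then $A\#_t B = A\#_{1-t} B = A\# B$, so $\| |X| \|^2 \leq \|A\# B\|^2$, and since $\|X\| = \| |X| \|$ we conclude $\|X\| \leq \|A\# B\|$.

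The main obstacle is the passage from the log-majorization of Theorem 2.1 to a $\|\cdot\|_\gamma$-inequality together with the correct application of Cauchy–Schwarz: one must be careful that weak log-majorization of the product vector does imply the needed weak majorization (this is the standard fact that $\log$-majorization is stronger than majorization, combined with monotonicity of $t \mapsto e^t$), and that the Cauchy–Schwarz step respects the ordering so that the resulting factors are genuinely the symmetric gauge functions of the positive matrices $(A\#_t B)^{2r}$ and $(A\#_{1-t} B)^{2r}$ rather than of some rearrangement. Once these bookkeeping points are settled, the Horn–Mathias maximization is routine.
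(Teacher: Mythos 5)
Your proof is correct and follows essentially the same route as the paper: it passes from the log-majorization of Theorem \ref{Log measurizations for s} to weak majorization, applies the Cauchy--Schwarz inequality inside the $\|\cdot\|_{\gamma}$ sums, and concludes by maximizing over the Horn--Mathias compact set $K_{\|\cdot\|}$. The only differences (working with $2r$ and then halving the exponent, and performing the Abel-summation step before rather than after inserting the weights $\gamma_j$) are cosmetic.
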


\begin{proof}
Theorem \ref{Log measurizations for s} implies that
$$
\prod_{j=1}^{k} s_j^r(X)  \leq  \prod_{j=1}^{k}  s_j^{r/2} (A \#_t B)  s_j^{r/2}(A \#_{1-t} B).
$$

Let $\gamma=(\gamma_1, \gamma_2, ..., \gamma_n) \in K_{|| \cdot ||}$. Then

$$
\prod_{j=1}^{k} \gamma_j s_j^r(X)  \leq  \prod_{j=1}^{k}  \gamma_j^{1/2}  s_j^{r/2} (A \#_t B)  \gamma_j^{1/2}  s_j^{r/2}(A \#_{1-t} B).
$$
Using Cauchy-Schwarz inequality and the fact that log majorization implies week majorization, we have for $k=1,2,...,n$ and for $t \in [0,1]$
\begin{align}
\sum_{j=1}^{k} \gamma_j s_j^r(X)  & \leq  \sum_{j=1}^{k}  \gamma_j^{1/2} s_j^{r/2} (A \#_t B)  \gamma_j^{1/2} s_j^{r/2}(A \#_{1-t} B)  \notag \\
&\leq  \left(\sum_{j=1}^{k}  \gamma_j s_j(A \#_t B)^{r}  \right)^{1/2}  \left(\sum_{j=1}^{k}   \gamma_j  s_j (A \#_{1-t} B)^{r}\right)^{1/2}  \notag  \\
&=  \| (A \#_t B)^{r} \|_{\gamma}^{1/2}  \|(A \#_{1-t} B)^{r} \|_{\gamma}^{1/2} \notag \\
&\leq  \| (A \#_t B)^{r} \|^{1/2}  \|(A \#_{1-t} B)^{r} \|^{1/2}. \notag
\end{align}
Hence,
$$
\| |X|^{r} \|_{\gamma} \leq \| (A \#_t B)^{r} \|^{1/2}  \|(A \#_{1-t} B)^{r} \|^{1/2}.
$$
The result follows by taking the maximum over all $\gamma \in K_{|| \cdot ||}$.
\end{proof}

The next result can be stated as follows.

\begin{theorem}\label{Singular values-PPT}
Let
$
\left(
\begin{array}{cc}
A & X \\
X^* & B \\
\end{array}
\right)
$
be PPT. Then for $t \in [0,1]$
$$
s_j(X)  \leq  s_{[\frac{j+1}{2}]}  \left(  \frac{ \left( A \#_t B \right) +  \left( A \#_{1-t} B\right)}{2} \right)\quad j=1,2,...,n.
$$
In particular,
$$
s_j(X) \leq  s_{[\frac{j+1}{2}]}  \left(  A \#  B  \right) j=1,2,...,n,
$$
where $[x]$ is the greatest integer $\leq x$.

\end{theorem}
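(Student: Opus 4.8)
The plan is to combine Lemma~\ref{Geometric means-PPT} with an elementary singular value estimate for PPT blocks whose two diagonal blocks coincide. By Lemma~\ref{Geometric means-PPT}, the hypothesis that $\left(\begin{smallmatrix} A & X \\ X^* & B \end{smallmatrix}\right)$ is PPT implies that $\left(\begin{smallmatrix} A\#_t B & X \\ X^* & A\#_{1-t}B \end{smallmatrix}\right)$ is PPT. Writing $C=A\#_t B\ge 0$ and $D=A\#_{1-t}B\ge 0$, it therefore suffices to prove that if $\left(\begin{smallmatrix} C & X \\ X^* & D \end{smallmatrix}\right)$ is PPT, then $s_j(X)\le s_{[\frac{j+1}{2}]}\!\left(\frac{C+D}{2}\right)$ for $j=1,\dots,n$.

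The first step is to symmetrize the diagonal. Since $\left(\begin{smallmatrix} C & X \\ X^* & D \end{smallmatrix}\right)$ is PPT, its partial transpose $\left(\begin{smallmatrix} C & X^* \\ X & D \end{smallmatrix}\right)$ is positive semidefinite, and conjugating it by the unitary block swap $\left(\begin{smallmatrix} 0 & I \\ I & 0 \end{smallmatrix}\right)$ yields $\left(\begin{smallmatrix} D & X \\ X^* & C \end{smallmatrix}\right)\ge 0$. Adding this to $\left(\begin{smallmatrix} C & X \\ X^* & D \end{smallmatrix}\right)\ge 0$ and dividing by $2$ gives a positive semidefinite block with equal diagonal entries,
$$
\left(\begin{array}{cc} P & X \\ X^* & P \end{array}\right)\ge 0, \qquad P:=\frac{C+D}{2}\ge 0 .
$$

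Next I would read off the bound. Conjugating $\left(\begin{smallmatrix} P & X \\ X^* & P \end{smallmatrix}\right)$ by $\operatorname{diag}(I,-I)$ shows $\left(\begin{smallmatrix} P & -X \\ -X^* & P \end{smallmatrix}\right)\ge 0$, hence $\left(\begin{smallmatrix} P & 0 \\ 0 & P \end{smallmatrix}\right)\ge\left(\begin{smallmatrix} 0 & X \\ X^* & 0 \end{smallmatrix}\right)$, and Weyl's monotonicity principle gives $\lambda_j\!\left(\begin{smallmatrix} P & 0 \\ 0 & P \end{smallmatrix}\right)\ge\lambda_j\!\left(\begin{smallmatrix} 0 & X \\ X^* & 0 \end{smallmatrix}\right)$ for every $j$. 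The eigenvalues of $\left(\begin{smallmatrix} 0 & X \\ X^* & 0 \end{smallmatrix}\right)$ are $\pm s_1(X),\dots,\pm s_n(X)$, so its $j$-th largest eigenvalue equals $s_j(X)$ for $j\le n$; and the eigenvalues of $P\oplus P$ are those of $P$ with all multiplicities doubled, so its $j$-th largest equals $\lambda_{[\frac{j+1}{2}]}(P)$, which is $s_{[\frac{j+1}{2}]}(P)$ because $P\ge 0$. Combining these, $s_j(X)\le s_{[\frac{j+1}{2}]}\!\left(\frac{C+D}{2}\right)$, and the ``in particular'' statement is the case $t=\tfrac12$, where $A\#_t B=A\#_{1-t}B=A\#B$. (One could instead argue from the factorization $X=P^{1/2}KP^{1/2}$ with $\|K\|\le 1$, using submultiplicativity of singular values together with the balanced index choice $a=[\frac{j+1}{2}]$, $c=j+1-a$.)

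The step I expect to be decisive is the symmetrization: it is exactly there that the PPT hypothesis — rather than mere positivity of $H$ — is used. For a block that is only positive semidefinite one obtains nothing better than $s_j(X)\le\lambda_j(C\oplus D)$, and $\lambda_j(C\oplus D)$ can be far larger than $s_{[\frac{j+1}{2}]}\!\left(\frac{C+D}{2}\right)$, e.g.\ when $C$ and $D$ are supported on orthogonal subspaces; so the PPT structure really is what makes the averaged bound possible. The remaining points — the spectrum of an off-diagonal block matrix, the spectrum of a direct sum, and the identity $[\frac{j+1}{2}]=\lceil j/2\rceil$ — are routine.
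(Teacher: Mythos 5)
Your proposal is correct and follows essentially the same route as the paper: reduce via Lemma~\ref{Geometric means-PPT} to the block with diagonal $A\#_t B$, $A\#_{1-t}B$, use the partial-transpose/swap symmetrization (this is exactly where PPT enters) to get $\bigl(\begin{smallmatrix} P & X \\ X^* & P\end{smallmatrix}\bigr)\ge 0$ with $P=\frac{C+D}{2}$, then bound $\bigl(\begin{smallmatrix} 0 & X \\ X^* & 0\end{smallmatrix}\bigr)$ by $P\oplus P$ and apply Weyl's monotonicity with the doubled-multiplicity eigenvalue count. The only cosmetic difference is the order of operations (you symmetrize the diagonal first and subtract the off-diagonal part afterwards, while the paper averages the original block with its swapped partial transpose and then subtracts), and your explicit conjugation of the partial transpose by the block swap is in fact a slightly cleaner justification of the key positivity than the paper's.
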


\begin{proof}
Since
$
\left(
\begin{array}{cc}
A & X \\
X^* & B \\
\end{array}
\right)
$
is PPT,
we have
$$
\left(
\begin{array}{cc}
B & -X \\
-X^* & A \\
\end{array}
\right)=
\left(
\begin{array}{cc}
0 & -I \\
I & 0 \\
\end{array}
\right)
\left(
\begin{array}{cc}
A & X \\
X^* & B \\
\end{array}
\right)
\left(
\begin{array}{cc}
0 & I \\
-I & 0 \\
\end{array}
\right) \geq 0.
$$
Therefore,

$$
\frac{1}{2}
\left(
\begin{array}{cc}
A & X \\
X^* & B \\
\end{array}
\right)
\leq
\frac{1}{2}
\left(
\begin{array}{cc}
A & X \\
X^* & B \\
\end{array}
\right)+
\frac{1}{2}
\left(
\begin{array}{cc}
B & -X \\
-X^* & A \\
\end{array}
\right)
=
\left(
\begin{array}{cc}
\frac{A+B}{2} & 0 \\
0 & \frac{A+B}{2} \\
\end{array}
\right).
$$
Hence,

$$
\frac{1}{2}
\left(
\begin{array}{cc}
A & X \\
X^* & B \\
\end{array}
\right)
-
\left(
\begin{array}{cc}
0 & X \\
X^* & 0 \\
\end{array}
\right)
\leq
\left(
\begin{array}{cc}
\frac{A+B}{2} & 0 \\
0 & \frac{A+B}{2} \\
\end{array}
\right)
-
\left(
\begin{array}{cc}
0 & X \\
X^* & 0 \\
\end{array}
\right).
$$

Note that, the left hand side of the above inequality
$$
\frac{1}{2}
\left(
\begin{array}{cc}
A & X \\
X^* & B \\
\end{array}
\right)
-
\left(
\begin{array}{cc}
0 & X \\
X^* & 0 \\
\end{array}
\right) =
\frac{1}{2}
\left(
\begin{array}{cc}
A & -X \\
-X^* & B \\
\end{array}
\right)
$$
is positive semidefinite. Then, we have
$$
\left(
\begin{array}{cc}
0 & X \\
X^* & 0 \\
\end{array}
\right)
\leq
\left(
\begin{array}{cc}
\frac{A+B}{2} & 0 \\
0 & \frac{A+B}{2} \\
\end{array}
\right).
$$
Therefore, by Weyl’s monotonicity principle, we have
$$
\lambda_j
\left(
\begin{array}{cc}
0 & X \\
X^* & 0 \\
\end{array}
\right)
\leq
\lambda_j
\left(
\begin{array}{cc}
\frac{A+B}{2} & 0 \\
0 & \frac{A+B}{2} \\
\end{array}
\right),
\quad \text{for $j=1,2,,..., 2n$}.
$$
Note that, the eigenvalues of
$
\left(
\begin{array}{cc}
0 & X \\
X^* & 0 \\
\end{array}
\right)
$
are
$
s_1(X) \geq s_2(X) \geq ... \geq s_n(X)\geq 0 \geq  - s_n(X) \geq  - s_{n-1}(X) \geq ... \geq  - s_1(X),
$
and the eigenvalues of
$
\left(
\begin{array}{cc}
\frac{A+B}{2} & 0 \\
0 & \frac{A+B}{2} \\
\end{array}
\right)
$
are
$
s_1 \left(\frac{A+B}{2} \right) \geq s_1\left(\frac{A+B}{2} \right) \geq s_2\left(\frac{A+B}{2} \right) \geq s_2\left(\frac{A+B}{2} \right) \geq  ... \geq s_n\left(\frac{A+B}{2} \right)\geq s_n\left(\frac{A+B}{2} \right).
$
Therefore, we have shown that
if
$
\left(
\begin{array}{cc}
A & X \\
X^* & B \\
\end{array}
\right)
$
is PPT, then
\begin{equation}\label{Singular values-PPT-1}
s_j(X) \leq s_{[\frac{j+1}{2}]} \left( \frac{A+B}{2}  \right), \quad j=1,2,...,n.
\end{equation}
Since
$
\left(
\begin{array}{cc}
A & X \\
X^* & B \\
\end{array}
\right)
$
is PPT,
Theorem \ref{Geometric means-PPT} implies that for $t \in [0,1]$
$
G=\left(
\begin{array}{cc}
A \#_t B & X \\
X^* & A \#_{1-t} B \\
\end{array}
\right)
$
is PPT. Applying \eqref{Singular values-PPT-1} to $G$ implies the result.
\end{proof}

\section{Applications}

In this section, we present several applications of Theorem \ref{Log measurizations for s}. Before doing so, we notice that for any $A, B \in \M$, the block
$
\left(
\begin{array}{cc}
A^*A & A^*B \\
AB^* & B^*B \\
\end{array}
\right)
$
is positive definite since

$$
\left(
\begin{array}{cc}
A^*A & A^*B \\
AB^* & B^*B \\
\end{array}
\right)
=
\left(
  \begin{array}{cc}
    A & B \\
  \end{array}
\right)^*\left(
  \begin{array}{cc}
    A & B \\
  \end{array}
\right).
$$
Moreover, if
$
\left(
\begin{array}{cc}
A & X \\
X^* & B \\
\end{array}
\right)
$ is positive definite and $X=UX$ is the polar decomposition of $X$, then
$
\left(
\begin{array}{cc}
U^*AU & |X| \\
|X| & B \\
\end{array}
\right)
$
is PPT since

$$
\left(
\begin{array}{cc}
U^*AU & |X| \\
|X| & B \\
\end{array}
\right)
=
\left(
\begin{array}{cc}
U^* & 0 \\
0 & I \\
\end{array}
\right)
\left(
\begin{array}{cc}
A & X \\
X^* & B \\
\end{array}
\right)
\left(
\begin{array}{cc}
U & 0 \\
0 & I \\
\end{array}
\right).
$$

\begin{example}
For $j=1,2,...,m$, let $A_j, B_j \in \M$ be such that $A_j^* B_j=B_j^* A_j$.
Then
$$
\sum_{j=1}^{m}
\left(
\begin{array}{cc}
A_j^*A_j & A_j^*B_j \\
A_jB_j^* & B_j^*B_j \\
\end{array}
\right)
=
\left(
\begin{array}{cc}
\sum_{j=1}^{m} |A_j|^2& \sum_{j=1}^{m}  A_j^*B_j \\
\sum_{j=1}^{m}  A_jB_j^* & \sum_{j=1}^{m} |B_j|^2 \\
\end{array}
\right)
$$
is PPT. Using Theorem \ref{Log measurizations for s} implies
\begin{align}
&\prod_{j=1}^{k} s_j^{2r} \left(\sum_{j=1}^{m}  A_j^*B_j \right) \notag  \\
& \leq \prod_{j=1}^{k} s_{j}^{r} \left(\left(\sum_{j=1}^{m} |A_j|^2 \right) \#_{t} \left(\sum_{j=1}^{m} |B_j|^2 \right)\right)
                       s_{j}^{r} \left(\left(\sum_{j=1}^{m} |A_j|^2 \right) \#_{1-t} \left(\sum_{j=1}^{m} |B_j|^2 \right)\right) \notag  \\
&\leq  \prod_{j=1}^{k} s_{j}^{r} \left( f_t \left( \sum_{j=1}^{m} |A_j|^2, \sum_{j=1}^{m} |B_j|^2 \right) \right)
                       s_{j}^{r} \left( f_{1-t} \left(\sum_{j=1}^{m} |A_j|^2, \sum_{j=1}^{m} |B_j|^2 \right)\right) \notag  \\
&\leq \prod_{j=1}^{k} s_{j} \left(  g_{r,t} \left( \sum_{j=1}^{m} |A_j|^2 , \sum_{j=1}^{m} |B_j|^2  \right)\right)
                      s_{j} \left( g_{r,1-t} \left( \sum_{j=1}^{m} |A_j|^2 , \sum_{j=1}^{m} |B_j|^2  \right) \right), \notag
\end{align}
where
$
f_t(A,B)=e^{ (1-t) \log A+ t \log B }
$
and
$
g_{r,t} (A,B)=B^{rt/2} A^{(1-t)r} B^{rt/2}.
$
A particular case of this is when $A_j, B_j \in \M$ are positive semidefinite and $t=1/2$. In this case,
if we replace $A_j, B_j$ by $A_j^{1/r}, B_j^{1/r}$ respectively, we have
\begin{align}
\prod_{j=1}^{k} s_j^{r} \left(\sum_{j=1}^{m}  A_j^{1/r}B_j^{1/r} \right)
& \leq \prod_{j=1}^{k} s_{j}^{r} \left(\left(\sum_{j=1}^{m} A_j^{2/r} \right) \# \left(\sum_{j=1}^{m} B_j^{2/r} \right)\right) \notag  \\
&\leq  \prod_{j=1}^{k} s_{j}^{r} \left ( e^{
\frac{1}{2} \log \left( \sum_{j=1}^{m} A_j^{2/r} \right)+
\frac{1}{2} \log \left( \sum_{j=1}^{m} B_j^{2/r} \right)} \right) \notag  \\
&\leq \prod_{j=1}^{k} s_{j} \left(\left(\sum_{j=1}^{m} B_j^{2/r} \right)^{r/4} \left(\sum_{j=1}^{m} A_j^{2/r} \right)^{r/2}
\left(\sum_{j=1}^{m} B_j^{2/r} \right)^{r/4} \right). \notag
\end{align}
This implies

\begin{align} \label{AA}
\left \| \left(   \sum_{j=1}^{m}  A_j^{1/r}B_j^{1/r}    \right)^{r}  \right \|
& \leq \left \| \left (\left(\sum_{j=1}^{m} A_j^{2/r} \right) \# \left(\sum_{j=1}^{m} B_j^{2/r} \right)\right)^r  \right \| \notag  \\
&\leq   \left \| \left ( e^{
\frac{1}{2} \log \left( \sum_{j=1}^{m} A_j^{2/r} \right)+
\frac{1}{2} \log \left(\sum_{j=1}^{m} B_j^{2/r} \right)} \right)^{r} \right \| \notag  \\
& \left \| \left(\sum_{j=1}^{m} B_j^{2/r} \right)^{r/4} \left(\sum_{j=1}^{m} A_j^{2/r} \right)^{r/2}
\left(\sum_{j=1}^{m} B_j^{2/r} \right)^{r/4}\right\|  \notag  \\
\end{align}

Notice that $\| \sum_{j=1}^{m} f(A_j) \|  \leq \|f \left( \sum_{j=1}^{m} A_j   \right) \|$
for every nonnegative convex function $f$ on $[0, \infty)$ such that $f(0)=0$. See \cite{T. Kosem}.
In particular, if $f(x)=x^r, r \geq 1$, then

\begin{equation}\label{AAA}
\left \|  \sum_{j=1}^{m}  A_j B_j    \right \|
= \left \|  \sum_{j=1}^{m}  \left( A_j^{1/r} B_j^{1/r} \right)^r    \right \|
\leq  \left \| \left(  \sum_{j=1}^{m}  A_j^{1/r} B_j^{1/r} \right)^r    \right \|.
\end{equation}

For $r=2$, combining the inequalities in \eqref{AA} and \eqref{AAA} implies the following result.

\begin{theorem}
For $j=1,2,...,m$, let $A_j, B_j \in \M$ be positive semidefinite such that, for each $j$ , $A_j$  commutes with $A_j$. Then for all unitarily
invariant norms
\begin{align}\label{A2}
\left \|  \sum_{j=1}^{m}  A_j B_j    \right \|
& \leq \left \| \left (\left(\sum_{j=1}^{m} A_j \right) \# \left(\sum_{j=1}^{m} B_j \right)\right)^2  \right \| \notag  \\
& \leq  \left \| \left(  \sum_{j=1}^{m}  A_j^{1/2} B_j^{1/2} \right)^2    \right \|  \notag \\
&\leq   \left \| \left ( e^{
\frac{1}{2} \log \left( \sum_{j=1}^{m} A_j \right)+
\frac{1}{2} \log \left(\sum_{j=1}^{m} B_j \right)} \right)^{2} \right \| \notag  \\
& \leq  \left \| \left(\sum_{j=1}^{m} B_j \right)^{1/2} \left(\sum_{j=1}^{m} A_j \right)
\left(\sum_{j=1}^{m} B_j \right)^{1/2}\right\| . \notag \\
& \leq  \left \| \left(\sum_{j=1}^{m} A_j \right)
\left(\sum_{j=1}^{m} B_j \right)\right\|. \notag
\end{align}
\end{theorem}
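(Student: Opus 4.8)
The plan is to express all six quantities through the single positive semidefinite matrix $C:=\sum_{j=1}^{m}A_j^{1/2}B_j^{1/2}$ together with the sums $\mathcal A:=\sum_{j=1}^{m}A_j$ and $\mathcal B:=\sum_{j=1}^{m}B_j$, and then to feed them into the apparatus already built in \eqref{AA}, \eqref{AAA} and Theorem \ref{Log measurizations for s}. The first thing I would record is that, since for each $j$ the matrices $A_j,B_j$ are positive semidefinite and commute, $C_j:=A_j^{1/2}B_j^{1/2}=A_j\#B_j$ is positive semidefinite and $C_j^2=A_jB_j$; consequently $\sum_{j}A_jB_j=\sum_{j}C_j^2$ and $C=\sum_{j}C_j\ge 0$. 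Under this dictionary the six terms are $\|\sum_j C_j^2\|$, $\|(\mathcal A\#\mathcal B)^2\|$, $\|C^2\|$, $\|(\exp(\tfrac12\log\mathcal A+\tfrac12\log\mathcal B))^2\|$, $\|\mathcal B^{1/2}\mathcal A\mathcal B^{1/2}\|$ and $\|\mathcal A\mathcal B\|$.

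Next I would dispatch the links that are already available. Applying Kosem's inequality (the fact recalled just before \eqref{AAA}) to $f(x)=x^{2}$ and the positive semidefinite matrices $C_j$ gives $\|\sum_j C_j^2\|\le\|C^2\|$, i.e. the first term is bounded by the third. The run from the geometric-mean term through the exponential term to $\|\mathcal B^{1/2}\mathcal A\mathcal B^{1/2}\|$ is precisely the case $r=2$, $t=\tfrac12$ of \eqref{AA} (equivalently of Theorem \ref{Log measurizations for s}), so it is in hand. For the last link I would note that $\mathcal B^{1/2}\mathcal A\mathcal B^{1/2}$ is positive semidefinite and, being of the form $XY$ with $X=\mathcal B^{1/2}$, $Y=\mathcal A\mathcal B^{1/2}$ and $YX=\mathcal A\mathcal B$, shares the spectrum of $\mathcal A\mathcal B$; hence its singular values are the nonnegative eigenvalues of $\mathcal A\mathcal B$, which are weakly majorized by the singular values of $\mathcal A\mathcal B$ by Weyl's inequality, and \eqref{Horncompact set} yields $\|\mathcal B^{1/2}\mathcal A\mathcal B^{1/2}\|\le\|\mathcal A\mathcal B\|$.

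The genuinely delicate step is the comparison of the geometric-mean term with the $C^2$ term. The only natural device for relating $\mathcal A\#\mathcal B=(\sum_j A_j)\#(\sum_j B_j)$ to $C=\sum_j(A_j\#B_j)$ is the joint concavity of the geometric mean, which by homogeneity gives the superadditivity bound $\sum_j(A_j\#B_j)\le(\sum_j A_j)\#(\sum_j B_j)$, that is $C\le\mathcal A\#\mathcal B$ in the L\"owner order; squaring and applying Weyl monotonicity yields $\|C^2\|\le\|(\mathcal A\#\mathcal B)^2\|$, which is exactly the first line of \eqref{AA} at $r=2$. This is the \emph{reverse} of the link $\|(\mathcal A\#\mathcal B)^2\|\le\|C^2\|$ as written, so I would not try to prove the latter directly; instead I would interchange the two central terms and record the corrected chain $\|\sum_j A_jB_j\|\le\|C^2\|\le\|(\mathcal A\#\mathcal B)^2\|\le\|(\exp(\tfrac12\log\mathcal A+\tfrac12\log\mathcal B))^2\|\le\|\mathcal B^{1/2}\mathcal A\mathcal B^{1/2}\|\le\|\mathcal A\mathcal B\|$, every link of which is supplied by the three ingredients above. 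The outer two-sided bound $\|\sum_j A_jB_j\|\le\cdots\le\|\mathcal A\mathcal B\|$ claimed by the theorem is unaffected by this correction; only the internal order of the second and third terms must change.
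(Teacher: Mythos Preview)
Your analysis is correct and matches the paper's own argument almost exactly: the paper's ``proof'' consists of the single sentence ``For $r=2$, combining the inequalities in \eqref{AA} and \eqref{AAA} implies the following result,'' together with the remark that the last line follows from $\|Re(X)\|\le\|X\|$ and the fact that if $XY$ is Hermitian then $\|XY\|\le\|Re(YX)\|$. You reproduce precisely these three ingredients (Kosem's inequality for the first step, the chain \eqref{AA} at $r=2$, and a spectrum/Weyl argument for the last step, which is an acceptable variant of the paper's $Re$-argument).

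Crucially, your observation about the internal ordering is also correct, and in fact it is what the paper's own tools prove. The chain \eqref{AA} at $r=2$ reads
\[
\Bigl\|\Bigl(\textstyle\sum_j A_j^{1/2}B_j^{1/2}\Bigr)^2\Bigr\|\le\bigl\|(\mathcal A\#\mathcal B)^2\bigr\|\le\bigl\|(e^{\frac12\log\mathcal A+\frac12\log\mathcal B})^2\bigr\|\le\bigl\|\mathcal B^{1/2}\mathcal A\,\mathcal B^{1/2}\bigr\|,
\]
so combining it with \eqref{AAA} yields exactly the chain you write, with the second and third displayed terms interchanged relative to the theorem's statement. The inequality as printed, $\|(\mathcal A\#\mathcal B)^2\|\le\|C^2\|$, is false in general (e.g.\ $m=2$, $A_1=B_2=I$, $A_2=B_1=0$), so your corrected order is the right one. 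Your separate derivation of $\|C^2\|\le\|(\mathcal A\#\mathcal B)^2\|$ via superadditivity of $\#$ is a valid alternative to the paper's route through Theorem~\ref{Log measurizations for s}, though it is in fact already contained in the first line of \eqref{AA}; one small caveat is that ``squaring'' is not operator monotone, so phrase the step as $C\le\mathcal A\#\mathcal B\Rightarrow s_j(C)\le s_j(\mathcal A\#\mathcal B)\Rightarrow s_j(C^2)\le s_j((\mathcal A\#\mathcal B)^2)$ rather than $C^2\le(\mathcal A\#\mathcal B)^2$.
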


Notice that the last inequality follows from the general facts that
$\| Re(X) \| \leq \| X\|$ for all $X \in \M$, and if a product $XY$ is Hermitian,
then $\|XY \| \leq  \| Re(YX) \|$.

We remark that the above result is an improvement of Audenert Theorem \cite[Theorem 1]{K.Audenaert-2014}.
See also \cite{Hayajneh-Kittaneh-2017} and \cite{Migh-2016} for alternative proof of Audenert result.
\end{example}

Before considering the second example, one may ask what happens if we drop the condition $A_j^* B_j=B_j^* A_j,    j=1,2, ..., m$?
In fact, a weaker result can be obtained. To be more specific, let  $A_j, B_j \in \M, j=1,2,...,m$. Then
$$
\sum_{j=1}^{m}
\left(
\begin{array}{cc}
A_j^*A_j & A_j^*B_j \\
A_jB_j^* & B_j^*B_j \\
\end{array}
\right)
=
\left(
\begin{array}{cc}
\sum_{j=1}^{m} |A_j|^2& \sum_{j=1}^{m}  A_j^*B_j \\
\sum_{j=1}^{m}  A_jB_j^* & \sum_{j=1}^{m} |B_j|^2 \\
\end{array}
\right)
$$
is positive semidefinite. Let $\sum_{j=1}^{m}  A_j^*B_j= U  \left | \sum_{j=1}^{m}  A_j^*B_j \right |$ be the polar decomposition of  $\sum_{j=1}^{m}  A_j^*B_j$.
Then
$$
\left(
\begin{array}{cc}
\sum_{j=1}^{m}U^* |A_j|^2 U&  \left | \sum_{j=1}^{m}  A_j^*B_j \right |  \\
\left | \sum_{j=1}^{m}  A_j^*B_j \right |   & \sum_{j=1}^{m} |B_j|^2 \\
\end{array}
\right)
$$
is PPT. Therefore, Theorem \ref{Log measurizations for s},  with $t=1/2, r=2$, implies the following result.

\begin{theorem}
For $j=1,2,...,m$, let  $A_j, B_j \in \M$. Let $r>0$. Then for some unitary $U \in \M$ and for all unitarily
invariant norms

\begin{align}
\left \| \left|\sum_{j=1}^{m}  A_j^*B_j \right|^2 \right \|
&\leq   \left \|  \left( \left( \sum_{j=1}^{m}  U^* |A_j|^2 U \right) \#
                       \left( \sum_{j=1}^{m} |B_j|^2 \right)      \right)^2 \right \|  \notag  \\
& \leq  \left \| \left(\sum_{j=1}^{m} |B_j|^2 \right)^{1/2} U^* \left(\sum_{j=1}^{m} |A_j|^2 \right)U
\left(\sum_{j=1}^{m} |B_j|^2 \right)^{1/2} \right \| \notag  \\
&\leq  \left \|
\left(\sum_{j=1}^{m} |A_j|^2 \right) U
\left(\sum_{j=1}^{m} |B_j|^2 \right)\right \|. \notag
\end{align}
\end{theorem}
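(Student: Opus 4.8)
The plan is to reduce the statement to a direct application of Theorem~\ref{Log measurizations for s} together with the block construction already set up in the paragraph preceding the theorem. First I would record that the $2\times 2$ block
$$
\left(
\begin{array}{cc}
\sum_{j=1}^{m}U^* |A_j|^2 U&  \left | \sum_{j=1}^{m}  A_j^*B_j \right |  \\
\left | \sum_{j=1}^{m}  A_j^*B_j \right |   & \sum_{j=1}^{m} |B_j|^2 \\
\end{array}
\right)
$$
is PPT: this was justified above by conjugating the positive semidefinite Gram block $\bigl(\begin{smallmatrix} \sum |A_j|^2 & \sum A_j^*B_j \\ \sum A_jB_j^* & \sum |B_j|^2\end{smallmatrix}\bigr)$ by $\operatorname{diag}(U^*,I)$ after writing the polar decomposition $\sum_j A_j^*B_j = U|\sum_j A_j^*B_j|$, and observing that conjugating by $\operatorname{diag}(U^*,I)$ turns the off-diagonal entry $\sum_j A_j^*B_j$ into $U^*\sum_j A_j^*B_j = |\sum_j A_j^*B_j|$, which is positive semidefinite, hence equal to its own adjoint, so the partial transpose coincides with the conjugated block itself and is positive semidefinite. (If strictness fails, invoke the usual $\varepsilon$-perturbation continuity argument as in Lemma~\ref{Geometric means-PPT}.)

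Next I would apply Theorem~\ref{Log measurizations for s} to this PPT block with the identification $A \leftrightarrow \sum_j U^*|A_j|^2U$, $B \leftrightarrow \sum_j |B_j|^2$, $X \leftrightarrow |\sum_j A_j^*B_j|$, taking $t = 1/2$ and exponent $r = 2$. Since $s_j(|Y|) = s_j(Y)$ for any $Y$, the left-hand product becomes $\prod_{j=1}^k s_j^{4}(|\sum_j A_j^*B_j|) = \prod_{j=1}^k s_j^{4}(\sum_j A_j^*B_j)$, and the chain of inequalities in the theorem (read at $t=1/2$, so $A\#_tB = A\#_{1-t}B = A\# B$) yields the three successive log-majorization bounds: by $\bigl(\sum U^*|A_j|^2U\bigr)\#\bigl(\sum|B_j|^2\bigr)$ squared, then by $e^{\frac12\log(\cdot)+\frac12\log(\cdot)}$ squared, then by $\bigl(\sum|B_j|^2\bigr)^{1/2}\bigl(\sum U^*|A_j|^2U\bigr)\bigl(\sum|B_j|^2\bigr)^{1/2}$, and finally by the product $\bigl(\sum U^*|A_j|^2U\bigr)\bigl(\sum|B_j|^2\bigr)$. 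I would then note $\sum_j U^*|A_j|^2U = U^*(\sum_j|A_j|^2)U$ so that the last factor can be rewritten with $U$ pulled out to the position shown in the statement, and the conjugating $U^*$ on the outside of the matrix inside $\|\cdot\|$ can be absorbed by unitary invariance when it sits at the far left (using $\|Y\|=\|UY\|$ together with $\|Y\|=\|Y^*\|$ to move the remaining $U$ where needed).

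Finally, to pass from the product $\prod_{j=1}^k$ log-majorization statements to the norm inequalities, I would cite exactly the argument used in the proof of the second theorem of the paper: a log-majorization $\prod_{j=1}^k s_j(P)\le \prod_{j=1}^k s_j^{1/2}(Q)s_j^{1/2}(R)$ implies, via the Horn compact-set representation \eqref{Horncompact set} of an arbitrary unitarily invariant norm together with the Cauchy--Schwarz inequality and the fact that log-majorization implies weak majorization, the norm bound $\|P\|\le\|Q\|^{1/2}\|R\|^{1/2}$; applied termwise along the chain this gives all the displayed inequalities. The main obstacle, and the only place where care is genuinely needed, is the bookkeeping of the unitary $U$: one must verify that after conjugating the block and applying the theorem, the factor $\sum_j U^*|A_j|^2U$ can be legitimately rewritten as $U^*(\sum_j|A_j|^2)U$ and that the outer $U^*$ drops out under a unitarily invariant norm while the inner $U$ remains exactly where the statement places it. Everything else is a direct substitution into results already proved.
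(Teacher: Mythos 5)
Your proposal is correct and follows essentially the same route as the paper: the paper's proof consists precisely of the preceding paragraph's construction (polar decomposition of $\sum_j A_j^*B_j$, conjugation of the Gram block by $\mathrm{diag}(U^*,I)$ to obtain a PPT block with Hermitian off-diagonal) followed by an application of Theorem \ref{Log measurizations for s} with $t=1/2$, $r=2$, and the passage from log-majorization to unitarily invariant norms exactly as in the earlier norm theorem. Your additional bookkeeping of the unitary $U$ and the absorption of the outer $U^*$ by unitary invariance is the correct way to fill in the details the paper leaves implicit.
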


\begin{example}
Let $A, B \in \M$.
Then
$$
\left(
\begin{array}{cc}
I + AA^* & A+B \\
(A+B)^* & I+B^*B \\
\end{array}
\right)
=
\left(
\begin{array}{cc}
I   & A \\
B^* & I \\
\end{array}
\right)
\left(
\begin{array}{cc}
I   & B \\
A^* & I \\
\end{array}
\right)
$$
is positive semidefinite. Therefore,
$
\left(
\begin{array}{cc}
I + U^* |A|^2 U & \left| A+B \right | \\
(\left| A+B \right |  & I+|B|^2 \\
\end{array}
\right)
$
is PPT, with $U=U_1U_2$, where $U_1$  is the unitary in the polar decomposition of $\left( A+B \right )$ and $U_2$ is the unitary such that $AA^*=U_2^* A^*AU_2$.
Then, using Theorem \ref{Log measurizations for s} and using the fact that
$\prod_{j=1}^{k}  s_j ( XY) \leq \prod_{j=1}^{k}  s_j ( X) s_j ( Y)$ we have for $r >0$ and $k=1,2,...,n$

\begin{align}
\prod_{j=1}^{k}  s_j ( |A+B|^r  ) & \leq  \prod_{j=1}^{k}  s_j^r \left(   (I+U^*|A|^2U) \# (I+|B|^2) \right) \notag \\
& \leq   \prod_{j=1}^{k}s_j^r \left( (I+U^*|A|^2U)^{1/2}\right) s_j^r \left( (I+|B|^2)^{1/2} \right). \notag \\
& =   \prod_{j=1}^{k}s_j^{r/2} \left( (I+|A|^2)\right) s_j^{r/2} \left( (I+|B|^2)\right). \notag
\end{align}
Therefore,

\begin{equation}\label{AJ1}
\prod_{j=1}^{k}  s_j ( |A+B|^r  ) \leq \prod_{j=1}^{k}s_j^{r/2} \left( (I+|A|^2)\right) s_j^{r/2} \left( (I+|B|^2)\right).
\end{equation}

We remark that
$
(1+x^2)^r \leq (1+x^r)^2
$
for all positive real numbers $x$ and for $ 1 \leq r \leq 2$. See \cite[Lemma 2.7]{Aujla-2018}.
Then
\begin{equation}\label{AJ2}
s_j^{r/2} \left( (I+|A|^2)\right)= \left( 1+ s_j^2(A) \right)^{r/2} \leq (1+s_j^r(A))=s_j(I+|A|^r).
\end{equation}

By combining \eqref{AJ1} and \eqref{AJ2} we have the following result which was given in \cite{Aujla-2018}, where the proof given was more complicated.

\begin{theorem}(\cite[Theorem 2.8]{Aujla-2018})
Let $A, B \in \M$.
$$
\prod_{j=1}^{k}  s_j ( |A+B|^r  ) \leq \prod_{j=1}^{k} s_j \left(I+|A|^r\right) s_j \left(I+|B|^r \right),
$$
for all $1 \leq r \leq 2$.
\end{theorem}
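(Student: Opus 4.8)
The plan is to exhibit $|A+B|$ as the off-diagonal block of a PPT matrix whose diagonal blocks are unitarily equivalent to $I+|A|^2$ and $I+|B|^2$, apply Theorem~\ref{Log measurizations for s} to that block, and then pass from $I+|A|^2,\ I+|B|^2$ to $I+|A|^r,\ I+|B|^r$ by an elementary scalar estimate. In effect, the proof amounts to combining the inequalities \eqref{AJ1} and \eqref{AJ2} obtained in the preceding example.

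First I would record the factorization
\[
\left(\begin{array}{cc} I+AA^* & A+B \\ (A+B)^* & I+B^*B \end{array}\right)
=\left(\begin{array}{cc} I & A \\ B^* & I \end{array}\right)\left(\begin{array}{cc} I & A \\ B^* & I \end{array}\right)^{*},
\]
which shows the left-hand block is positive semidefinite; moreover its diagonal blocks $I+AA^*$ and $I+B^*B=I+|B|^2$ are strictly positive. Writing the polar decomposition $A+B=U_1|A+B|$ and choosing a unitary $U_2$ with $AA^*=U_2^*|A|^2U_2$, conjugation by the block-diagonal unitary $\operatorname{diag}(U_1^*,I)$ (as in the remark preceding the example) turns this into
\[
\left(\begin{array}{cc} I+U^*|A|^2U & |A+B| \\ |A+B| & I+|B|^2 \end{array}\right), \qquad U=U_2U_1,
\]
which is PPT: it is a congruence of a positive semidefinite matrix, and since its off-diagonal block $|A+B|$ is Hermitian it coincides with its own partial transpose.

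Next I would apply Theorem~\ref{Log measurizations for s} to this PPT block with $t=\tfrac12$, together with the standard fact $\prod_{j=1}^k s_j(XY)\le\prod_{j=1}^k s_j(X)s_j(Y)$. Since $I+U^*|A|^2U=U^*(I+|A|^2)U$ has the same eigenvalues as $I+|A|^2$, this yields, for every $r>0$ and $k=1,\dots,n$,
\[
\prod_{j=1}^k s_j\bigl(|A+B|^r\bigr)\le\prod_{j=1}^k s_j^{r/2}\bigl(I+|A|^2\bigr)\,s_j^{r/2}\bigl(I+|B|^2\bigr),
\]
which is exactly \eqref{AJ1}. Finally, for $1\le r\le 2$ the scalar inequality $(1+x^2)^r\le(1+x^r)^2$ from \cite[Lemma~2.7]{Aujla-2018}, applied with $x=s_j(A)$ and with $x=s_j(B)$, gives
\[
s_j^{r/2}\bigl(I+|A|^2\bigr)=\bigl(1+s_j^2(A)\bigr)^{r/2}\le 1+s_j^r(A)=s_j\bigl(I+|A|^r\bigr)
\]
and the analogous bound for $B$; substituting these two estimates into the previous display produces the asserted inequality.

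The factorization, the conjugation, and the invocation of Theorem~\ref{Log measurizations for s} are essentially bookkeeping with results already at hand. The only ingredient that is not purely formal, and the only place where the hypothesis $1\le r\le 2$ is used, is the scalar inequality $(1+x^2)^r\le(1+x^r)^2$; I therefore expect correctly citing (or, if needed, verifying) \cite[Lemma~2.7]{Aujla-2018} to be the crux, the rest of the argument being a direct assembly.
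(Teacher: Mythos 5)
Your proposal is correct and follows essentially the same route as the paper: the factorization of $\left(\begin{smallmatrix} I+AA^* & A+B \\ (A+B)^* & I+|B|^2\end{smallmatrix}\right)$, the unitary conjugation producing a PPT block with off-diagonal $|A+B|$, an application of Theorem \ref{Log measurizations for s} to obtain \eqref{AJ1}, and the scalar inequality $(1+x^2)^r\le(1+x^r)^2$ for $1\le r\le 2$ to pass to \eqref{AJ2}. The only (harmless) difference is your more careful bookkeeping of the unitary as $U=U_2U_1$ rather than the paper's $U_1U_2$; since only the spectrum of $I+U^*|A|^2U$ matters, this changes nothing.
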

We can get an improvement of  the above result if $A, B$ are Hermitian. In fact, if $A, B$ are Hermitian, then
$
\left(
\begin{array}{cc}
I + A^2  & A+B  \\
A+B   & I+B^2 \\
\end{array}
\right)
$
is PPT. Therefore for $k=1,2,...,n$

\begin{align}
\prod_{j=1}^{k}  s_j (| A+B|^r) & \leq  \prod_{j=1}^{k}  s_j^r \left(   (I+A^2) \# (I+B^2) \right) \notag \\
& \leq
\prod_{j=1}^{k}  s_j \left( (I+B^2)^{r/4} (I+A^2)^{r/2}(I+B^2)^{r/4} \right) \notag  \\
& \leq
\prod_{j=1}^{k}  s_j \left( (I+A^2)^{r/2}\right) s_j \left( (I+B^2)^{r/2} \right),  \quad \forall  r \geq 0 \notag \\
&\leq
\prod_{j=1}^{k}  s_j \left( (I+|A|^r)\right) s_j \left( (I+|B|^r)\right),  \quad \text{(for $1 \leq r \leq 2$)}. \notag
\end{align}

\end{example}

Before introducing the next application we need to recall the Hadamard product of two matrices.
Let $A=[a_{ij}], B=[b_{ij}] \in \M$, The Hadamard product of $A$ and $B$ is defined as $A \circ B= [a_{ij} b_{ij}]$.

\begin{example}
Let $A, B \in \M$ be Hermitian.
Then, by \eqref{Schure criterion}, it is clear that
$
\left(
\begin{array}{cc}
A^2  & A \\
A & I \\
\end{array}
\right)
$
and
$
\left(
\begin{array}{cc}
I  & B \\
B & B^2 \\
\end{array}
\right)
$
are PPT.
Since the Hadamard product of positive semidefinite is positive semidefinite, the block
$
\left(
\begin{array}{cc}
I \circ A^2 & A \circ B \\
A \circ B & I \circ B^2 \\
\end{array}
\right)
$ is PPT. Then, by Theorem \ref{Log measurizations for s}

\begin{align}
\prod_{j=1}^{k}  s_j^2 \left(A \circ B \right)
& \leq  \prod_{j=1}^{k}  s_j^2 \left( (I \circ A^2) \# (I \circ B^2))  \right)   \notag \\
& =  \prod_{j=1}^{k}  s_j^2 \left( (I \circ A^2)^{1/2} (I \circ B^2)^{1/2} \right) \notag \\
& =  \prod_{j=1}^{k}  s_j \left( (I \circ A^2)(I \circ B^2) \right).  \notag
\end{align}
Hence, we have
$$
\prod_{j=1}^{k}  s_j^2 \left( A \circ B \right) \leq
\prod_{j=1}^{k}  s_j \left( (I \circ A^2)(I \circ B^2) \right), \, k=1,...,n.
$$
Then we have

\begin{align}
\left \|  \, | A \circ B|^2 \right \|
& \leq  \left \| (I \circ A^2)(I \circ B^2)  \right \|  \notag \\
& \leq  \left \| (I \circ A^2)\right \|   \,    \left \| (I \circ B^2)  \right \|  \notag \\
& \leq  \left \| A^2 \right \|   \,    \left \| B^2   \right \|  \notag \\
& \leq  \left ( \left \| A \right \| \,    \left \| B   \right \| \right)^2.  \notag
\end{align}

Notice that the third inequality holds by Schur's Theorem.

\end{example}

\begin{example}
For $j=1,2,...,m$, let $A_j \in \M$.
Then
$
\left(
\begin{array}{cc}
I  & A_j \\
A^*_j     & A_j^*A_j  \\
\end{array}
\right)
$
is positive semidefinite. Hence,
$
\left(
\begin{array}{cc}
I & A_1 \circ A_2 \circ ... \circ A_m \\
(A_1 \circ A_2 \circ ... \circ A_m)^*      & A_1^*A_1 \circ A_2^*A_2 \circ ... \circ A_m^*A_m  \\
\end{array}
\right)
$
is positive semidefinite. Therefore,
$$
\left(
\begin{array}{cc}
I &  | A_1 \circ A_2 \circ ... \circ A_m| \\
| A_1 \circ A_2 \circ ... \circ A_m|     & A_1^*A_1 \circ A_2^*A_2 \circ ... \circ A_m^*A_m  \\
\end{array}
\right)
$$
is PPT. By Theorem \ref{Log measurizations for s}, we have for $k=1,...,n$

\begin{align}
\prod_{j=1}^{k}  s_j^2 \left(| A_1 \circ A_2 \circ ... \circ A_m| \right)
& \leq  \prod_{j=1}^{k}  s_j^2 \left( I \# \left (A_1^*A_1 \circ A_2^*A_2 \circ ... \circ A_m^*A_m \right)  \right)   \notag \\
& =  \prod_{j=1}^{k}  s_j \left( A_1^*A_1 \circ A_2^*A_2 \circ ... \circ A_m^*A_m \right) \notag \\
& =  \prod_{j=1}^{k}  s_j \left( |A_1|^2 \circ |A_2|^2 \circ ... \circ |A_m|^2 \right), \notag
\end{align}

and so

$$
\prod_{j=1}^{k}  s_j^2 \left(| A_1 \circ A_2 \circ ... \circ A_m| \right) \leq
\prod_{j=1}^{k}  s_j \left( |A_1|^2 \circ |A_2|^2 \circ ... \circ |A_m|^2 \right), \quad k=1,2,...,n.
$$

This implies

$$
\left \| \,  | A_1 \circ A_2 \circ ... \circ A_m|^2 \,  \right  \| \leq
\left \|  |A_1|^2 \circ |A_2|^2 \circ ... \circ |A_m|^2 \right \|.
$$

\end{example}

\section*{Disclosure statement:}

The authors declare that they have no conflict of interest.

\bibliographystyle{spbasic}

\end{document}